\documentclass{amsart}
\usepackage[all]{xy}

\newcounter{zlist}

\newcounter{blist}

\newcounter{rlist}

\swapnumbers
\newtheorem{theorem}{Theorem}[section]
\newtheorem{lemma}[theorem]{Lemma}

\newtheorem{proposition}[theorem]{Proposition}

\newtheorem{remark}[theorem]{Remark}
\numberwithin{equation}{section}

\newcommand{\A}{{\mathcal{A}}}
\newcommand{\B}{{\mathcal{B}}}
\newcommand{\M}{{\mathfrak{M}}}
\newcommand{\X}{{\mathfrak{X}}}
\begin{document}

\title{Pure morphisms are effective for modules}
 \author{Bachuki Mesablishvili}
\thanks{The work was partially supported by Volkswagen Foundation (Ref.: I/85989) and  Shota Rustaveli National Science Foundation Grant DI/12/5-103/11}

\begin{abstract} Yet another proof of the result asserting that a morphism of commutative rings is an
effective descent morphism for modules if and only if it is pure is given. Moreover, it is shown that this
result cannot be derived from Moerdijk's descent criterion.
\end{abstract}

\smallskip

\keywords{Indexed categories, effective descent morphisms, pure morphisms}
\subjclass[2010]{13B02, 18C20, 18D30, 16D90}

\maketitle

\section{Introduction}

Let $\A$ be a category with pullbacks. An $\A$-indexed category $\X$ is a pseudo-functor $\A^{\text{op}} \to \mathbf{CAT}$,
where $\mathbf{CAT}$ denotes the 2-category of locally small (but possibly large) categories,
explicitly given by the data of a family of categories $\X^A$,
indexed by the objects of $\A$, with \emph{change of base functors} $p^*:\X^B \to \X^E$ for each
morphism $p:E \to B$ of $\A$ and with additional structure expressing the idea of a
pseudo-functor (see, \cite{McP}, \cite{PS}). For example, $\A^{\text{op}} \to \mathbf{CAT}$ given by $A \longmapsto \A/A$ and $(p:E \to B) \longmapsto (p^* : \A/B \to \A/E)$,
the pullback functor along $p$, is an $\A$-indexed category, also called the \emph{basic} $\A$-\emph{indexed category}.

If $\mathbf{C}$ is an internal category of $\A$ (e.g., \cite[Chapter XII]{Mc}), then one defines an ordinary category $\A^\mathbf{C}$ of $\mathbf{C}$-diagrams
in $\A$, and the assignment $\mathbf{C} \to \A^\mathbf{C}$ induces a pseudo-functor
 $$\X^{(-)}:\mathbf{cat}(\A)^{\text{op}}\to \mathbf{CAT}$$
of 2--categories (see \cite{JT2}), where $\mathbf{cat}(\A)$ denotes the 2-category of internal categories of $\A$.
Let $p : E \to B$ be a morphism in $\A$. Then, $p$ gives rise to an internal category $\mathbf{Eq}(p)$
of $\A$, namely the equivalence relation induced by $p$, and to a fully faithful (internal) functor
$p :\mathbf{Eq}(p) \to \mathbf{B}$, where $\mathbf{B}$ is the discrete internal category on $B$. The category $\X^{\mathbf{Eq}(p)}$ is
called the \emph{category of} $\X$-\emph{descent data relative to} $p$, and denoted by $\mathrm{Des}_\A(p)$.
The pseudo-functor $\X^{(-)}:\mathbf{cat}(\A)^{\text{op}}\to \mathbf{CAT}$
carries $p$ into an ordinary functor $$K_p : \X^B \to \mathrm{Des}_\A(p),$$
and one says that $p$ is an \emph{effective} $\X$-\emph{descent morphism} if $K_p$ is an equivalence of categories. When $\X$
is the base indexed category, one also says that $p$ is an \emph{effective descent morphism in} $\A$.

In the case of an $\A$-indexed category $\X$ satisfying the Beck-Chevalley condition (which requires that for each
$p:E \to B$ in $\A$, the functor $p^*:\X^B \to \X^E$ admits a left adjoint $p_!: \X^E \to \X^B$, and
for every pullback diagram
$$\xymatrix{E' \ar[r]^{q} \ar[d]_{g} & B' \ar[d]^f \\
E\ar[r]_p &B,}$$  the canonical map $q_!g^* \to f^*p_!$ is an isomorphism, and which is always satisfied by the basic $\A$-indexed category), then there exists an
equivalence of categories between the category  $\mathrm{Des}_\A(p)$ of descent data for $p$ in $\A$,
and the category $(\X^E)^{\textbf{T}_p}$ of $\textbf{T}_p$-algebras for the monad on $\X^E$ induced by the adjoint pair
$p_! \dashv p^*:\X^E \to \X^B$. Moreover, modulo this equivalence, the functor $K_p : \X^B \to \mathrm{Des}_\A(p)$
corresponds to the comparison functor $\X^B \to (\X^E)^{\textbf{T}_p}$. Thus, $p$ is an effective $\X$-descent
morphism if and only if the functor $p^*:\X^B \to \X^E$ is monadic. Hence the descent problem
of determining whether a given morphism is effective for descent
 w.r.t. an indexed category satisfying the Beck-Chevalley condition can be reduced to
the problem of monadicity of a suitable functor. Consequently, in verifying that a particular morphism is
effective for descent, the various versions of Beck's Monadicity Theorem are extremely useful. Note that the crucial requirement of
each of these versions is the preservation of some kind of coequalizers.

Now let $\mathcal{E}$ be the category dual to the category of commutative rings (with 1) and consider the $\mathcal{E}$-indexed category
$\M:\mathcal{E}^{\text{op}} \to \mathbf{CAT}$ that assigns to each commutative ring $B$ the category of $B$-modules, $\textbf{Mod}_B$,
and to any morphism $p:B \to E$ of commutative rings the \emph{extension-of-scalars functor} $E\otimes_B -:\textbf{Mod}_B \to \textbf{Mod}_E$.
In this case, one sometimes speaks of \emph{effective descent for modules}. It
is well known that $\M$ satisfies the dual of the Beck-Chevalley condition. Thus, $p$ is an effective descent morphism for modules iff the functor
$E\otimes_B -$ is comonadic.

It is proved in \cite{M1} that effective descent morphisms for modules are precisely the pure monomorphisms. The
proof makes heavy use of the description of purity by means of the functor $\mathrm{Hom}_\mathbb{Z}(-, \mathbb{Q/\mathbb{Z}}):\textbf{Ab} \to \textbf{Ab}^{\text{op}}$ to check the needed preservation of certain coequalizer diagrams.

In this paper, we give a different proof of this result which uses another characterization of pure morphisms, namely that
pure morphisms of modules are precisely filtered colimits of split monomorphisms. The key ingredient in our proof is the observation
that for the comonadicity of an additive faithful functor between abelian categories, one only needs preservation of certain monomorphisms, not some kind of equalizer diagrams. This follows from a variation of Duskin's theorem (see Theorem \ref{Ab}). By analyzing these particular monomorphisms, we derive
that they all are pure, thus preserved by the extension-of-scalars functors. This result together with the fact that for any morphism of commutative rings, the corresponding extension-of-scalars functor is faithful if and only if the morphism is pure, imply the comonadicity of the extension-of-scalars functors.

In \cite{Mk}, I. Moerdijk gave four conditions which, if satisfied by some class $\mathfrak{C}$ of morphisms in a category $\A$ with pullbacks,
imply that the members of $\mathfrak{C}$ are effective descent morphisms in $\A$. Since a morphism of commutative rings
is an effective descent morphism in $\mathcal{E}$ if and only if it is an effective descent morphism for modules (see \cite{JT3}),
it is natural to ask whether these conditions are satisfied by the class of pure monomorphisms of commutative rings. We show in Section 3
that the answer is negative.

As background to the subject, we refer to S. MacLane \cite{Mc} for
generalities on category theory, to \cite{McP}, \cite{PS} for terminology and general
results on indexed categories and to G. Janelidze and W. Tholen \cite{JT1}, \cite{JT2} and \cite{JT3} for descent theory.

\section{Main result}

Let $p:B \to E$ be a morphism of commutative rings. Write $p^*$ for the \emph{extension-of-scalars functor} $$E \otimes_B - :\textbf{Mod}_B \to \textbf{Mod}_E.$$
Recall that a morphism $f : M \to M'$ of $B$-modules is \emph{pure} if for any $B$-module $N$, $$N\otimes_B f: N\otimes_B M \to N \otimes_B M'$$ is monic.

\begin{lemma}\label{L} If $p:B \to E$ is a pure morphism of commutative rings (i.e. $p$ is pure as a morphism
of $B$-modules), then the equalizer of any $p^*$-split pair of morphisms of $B$-modules is pure.
\end{lemma}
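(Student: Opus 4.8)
The plan is to prove that the equalizer $k\colon K\to M$ of a $p^*$-split pair $f,g\colon M\to M'$ of $B$-modules is pure by factoring the composite $\eta_M\circ k$ — where $\eta_M\colon M\to p_*p^*M=E\otimes_B M$ is the unit of the adjunction $p^*\dashv p_*$, $p_*$ being restriction of scalars — through the equalizer of $(p^*f,p^*g)$, and showing that each of the two factors is a pure morphism of $B$-modules. I will freely use the elementary closure properties of pure monomorphisms of modules: split monomorphisms are pure; pure monomorphisms are closed under composition and under filtered colimits in the arrow category; and if $N\otimes_B(\beta\alpha)$ is a monomorphism for every $B$-module $N$, then so is $N\otimes_B\alpha$. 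Since $\textbf{Mod}_B$ is abelian I write $K=\ker h$ with $h=f-g$, so that $k$ is the equalizer of $f$ and $g$, and similarly for $E\otimes_B-$ applied to these data.

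The hypothesis that $(f,g)$ is $p^*$-split provides a split equalizer $e\colon L\to p^*M$ of $(p^*f,p^*g)$ in $\textbf{Mod}_E$ together with its retraction maps. Applying the functor $p_*\colon\textbf{Mod}_E\to\textbf{Mod}_B$ — every functor preserves split equalizers — gives a split equalizer $p_*e\colon p_*L\to E\otimes_B M$ of $(E\otimes_B f,\,E\otimes_B g)$ in $\textbf{Mod}_B$; in particular $p_*L\cong\ker(E\otimes_B h)$ and $p_*e$ is a split, hence pure, monomorphism of $B$-modules. From $(E\otimes_B f)\,\eta_M=\eta_{M'}\,f$ and $(E\otimes_B g)\,\eta_M=\eta_{M'}\,g$ it follows that $\eta_M\circ k$ equalizes $E\otimes_B f$ and $E\otimes_B g$, hence factors uniquely as $\eta_M\circ k=(p_*e)\circ\phi$ for a unique $\phi\colon K\to p_*L$. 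If $\phi$ is pure, then $\eta_M\circ k$ is a composite of pure monomorphisms, so it is pure, and then for every $B$-module $N$ the map $N\otimes_B(\eta_M\circ k)=(N\otimes_B\eta_M)(N\otimes_B k)$ is a monomorphism, whence so is $N\otimes_B k$; as $N$ was arbitrary, $k$ is pure. So the whole statement reduces to the purity of $\phi$.

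Establishing that $\phi$ is pure is the step where the hypothesis on $p$ is genuinely used, and it is the main obstacle. Invoking the characterization of pure morphisms of modules as filtered colimits of split monomorphisms, I would write $p\colon B\to E$ as a filtered colimit $\varinjlim_i(p_i\colon B\to E_i)$ of split monomorphisms of $B$-modules with constant domain $B$ (this can be arranged by pulling back finitely presented approximations of $\mathrm{coker}\,p$ along the quotient $E\to\mathrm{coker}\,p$, since a pure-exact sequence with finitely presented quotient splits). Fixing $B$-linear splittings gives decompositions $E_i\cong B\oplus C_i$ with $C_i=\mathrm{coker}\,p_i$, under which $E_i\otimes_B h$ becomes $h\oplus(C_i\otimes_B h)$ and $\ker(E_i\otimes_B h)\cong K\oplus K_i$, where $K_i:=\ker(C_i\otimes_B h)$. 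Since filtered colimits are exact, $p_*L\cong\ker(E\otimes_B h)\cong\varinjlim_i(K\oplus K_i)$; a short verification — using that $p_*e$ is a monomorphism and that the cocone induced on the $K$-summands is the constant one coming from $\eta_M\circ k$ — identifies $\phi$ with the filtered colimit of the split monomorphisms $K\hookrightarrow K\oplus K_i$. Hence $\phi$ is a pure monomorphism, which finishes the argument. The delicate point is exactly this last identification: $\phi$ is in general \emph{not} itself a split monomorphism, because the transition maps of the diagram need not respect the decompositions $K\oplus K_i$; it is precisely the purity of $p$ that yields the presentation of $\phi$ as a filtered colimit of split monomorphisms.
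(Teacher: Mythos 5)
Your proof is correct and follows essentially the same route as the paper: both write the pure morphism $p$ as a directed colimit of split monomorphisms $p_i\colon B\to E_i$ (the Ad\'amek--Rosick\'y presentation), form the level-wise equalizers/kernels, show that the comparison map into the object of the $p^*$-split equalizer (your $\phi$, the paper's $s$) is a filtered colimit of split monomorphisms, and conclude by composing with the split monomorphism $p_*e$ (the paper's $\overline{f}$) and cancelling. The only cosmetic difference is that you exhibit the level-wise splittings via the direct-sum decompositions $\ker(E_i\otimes_B h)\cong K\oplus K_i$, whereas the paper constructs the retractions $t_d$ directly from the splittings $q_d$ by the universal property of equalizers.
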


\begin{proof} If  $p:B \to E$ be a pure morphism of commutative rings, then by \cite[Proposition 2.30 (ii) ]{AR} there is a directed diagram of morphisms $(p_d:B \to E_d)_{d \in \mathcal{D}}$
of $B$-modules with connecting morphisms
$(1_B,e_{d,d'}):p_d \to p_{d'}$ for $d\leq d'$ such that each $p_d$ is a split monomorphism (say, with splitting $q_d$) and
$p$ is a colimit of this diagram, say, with colimit morphisms

$$\xymatrix {B \ar[r]^{p_d}\ar@{=}[d]& E_d \ar[d]^{\kappa_d} \\
B \ar[r]_p &E, & d \in \mathcal{D}.}$$

Suppose now that
$$
\xymatrix { X    \ar[r]^{f}&    Y  \ar@{->}@<0.5ex>[r]^{g}\ar@<-0.7ex>[r]_{h}& Z }
$$ is an equalizer diagram  of $B$-modules such that there is a split equalizer diagram of $E$-modules

$$\xymatrix {M \ar[r]^-{\overline{f}}&  E\otimes_B Y  \ar@{->}@<0.5ex>[rr]^{E\otimes_B g}\ar@<-0.7ex>[rr]_{E\otimes_B h}&& E\otimes_B Z .}
$$  Then, in $\textbf{Mod}_B$, we have the following commutative diagram

\begin{equation}\label{eq.1}
\xymatrix {X \ar[d]_{s} \ar[rr]^-{f}& &      Y \ar[d]|{p\otimes_B Y} \ar@{->}@<0.5ex>[rr]^{ g}\ar@<-0.7ex>[rr]_{h}&&  Z \ar[d]|{p\otimes_B Z}\\
M  \ar[rr]_-{\overline{f}}& &      E \otimes_B Y  \ar@{->}@<0.5ex>[rr]^{E\otimes_B g}\ar@<-0.7ex>[rr]_{E\otimes_B h}&& E\otimes_B Z}
\end{equation} for some $s:X \to M.$

For any $d \in \mathcal{D}$, let

$$\xymatrix {M_d \ar[r]^-{f_d} &E_d \otimes_B Y  \ar@{->}@<0.5ex>[r]^{E_d \otimes_B g}\ar@<-0.7ex>[r]_{E_d \otimes_B h}& E_d \otimes_B Z }$$ be
an equalizer diagram. Then it is not hard to see that  $$(f_d, (v_{d,d'},e_{d,d'}\otimes_B Y):f_d \to f_{d'})_{d\leq d'\in \mathcal{D}}$$ is a directed diagram of morphisms of $B$-modules, where
connecting morphisms $v_{d,d'}:M_d \to M_{d'}$ ($d\leq d'$) are the comparison morphisms induced by the universal property of equalizers:

$$\xymatrix {M_d \ar@{..>}[d]|{v_{d,d'}} \ar[rr]^-{f_d}& &      E_d\otimes_B Y \ar[d]|{e_{d,d'}\otimes_B Y} \ar@{->}@<0.5ex>[rr]^{E_d\otimes_B g}\ar@<-0.7ex>[rr]_{E_d\otimes_B h}&& E_d\otimes_B Z \ar[d]|{e_{d,d'}\otimes_B Z}\\
M_{d'}  \ar[rr]_-{f_{d'}}& &      E_{d'} \otimes_B Y  \ar@{->}@<0.5ex>[rr]^{E_{d'}\otimes_B g}\ar@<-0.7ex>[rr]_{E_{d'}\otimes_B h}&& E_{d'}\otimes_B Z.}
$$

\noindent For each $d \in \mathcal{D}$, write $\iota_d:M_d \to M$ for the comparison morphism  making the diagram

$$\xymatrix {M_d \ar@{..>}[d]|{\iota_{d}} \ar[rr]^-{f_d}& &      E_d\otimes_B Y \ar[d]|{\kappa_{d}\otimes_B Y} \ar@{->}@<0.5ex>[rr]^{E_d\otimes_B g}\ar@<-0.7ex>[rr]_{E_d\otimes_B h}&& E_d\otimes_B Z \ar[d]|{\kappa_{d}\otimes_B Z}\\
M  \ar[rr]_-{\overline{f}}& &      E \otimes_B Y  \ar@{->}@<0.5ex>[rr]^{E\otimes_B g}\ar@<-0.7ex>[rr]_{E\otimes_B h}&& E\otimes_B Z}
$$ commute. Since $E$ is a directed colimit of the $E_d$'s  and since directed colimits commute with equalizers (e.g. \cite{AR}), taking directed
colimit in the last diagram gives

$$\overline{f}={\varinjlim}_{d\in \mathcal{D}}(f_d, (v_{d,d'},e_{d,d'}\otimes_B Y):f_d \to f_{d'}).$$

\noindent Consider now the following commutative diagram

$$\xymatrix { X \ar@<-.7ex>[dd]_{s_d}    \ar[rr]^{f}&&    Y \ar@<-.7ex>[dd]_{p_d\otimes_B Y} \ar@{->}@<0.5ex>[rr]^{g}\ar@<-0.7ex>[rr]_{h}&& Z \ar@<-.7ex>[dd]_{p_d\otimes_B Z}\\\\
M_d   \ar@<-.7ex>[uu]_{t_d}         \ar[rr]_{f_d}& &      E_d\otimes_B Y \ar@<-.7ex>[uu]_{q_d\otimes_B Y} \ar@{->}@<0.5ex>[rr]^{E_d\otimes_B g}\ar@<-0.7ex>[rr]_{E_d\otimes_B h}&& E_d\otimes_B Z \ar@<-.7ex>[uu]_{q_d\otimes_B Z},}
$$ in which $s_d$ and $t_d$ are the comparison morphisms induced by the universal property of equalizers.
This gives rise to a directed  diagram $$(s_d, (1_X, v_{d,d'}):s_d \to s_{d'})_{d\leq d'\in \mathcal{D}}.$$ Since $\varinjlim_{d\in \mathcal{D}}f_d=\overline{f}$ and since directed colimits commute with equalizers, taking the directed colimit in the last diagram, we obtain $$s=\varinjlim_{d\in \mathcal{D}}(s_d, (1_X, v_{d,d'}):s_d \to s_{d'}).$$ Since $q_dp_d=1$, $t_ds_d=1$, and hence each $s_d$ is a split monomorphism, implying that $s$ is pure. Looking now at the left square in Diagram (\ref{eq.1}) and using that
\begin{itemize}
  \item the morphism $\overline{f}$, being a split monomorphism, is pure;
  \item pure morphisms are closed under composition (see \cite{AR});
  \item pure morphisms are left cancellative (see \cite{AR}),
\end{itemize} one obtains that $f$ is also pure.

\end{proof}

We need the the following variation of Duskin's
theorem (see \cite{M}).

\begin{theorem} \label{Ex}Let $\A$ be a category admitting kernel-pairs
of split epimorphisms and let $\B$ be an exact category. Then the
following two assertions are equivalent for any right adjoint
functor $U: \B \to \A$:

\begin{itemize}
\item [(i)] $U$ is monadic.

\item [(ii)] $U$ is conservative and $U$ preserves those
regular epimorphisms whose kernel-pairs are $U$-split.
\end{itemize}
\end{theorem}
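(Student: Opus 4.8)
The plan is to deduce this from the classical (crude) monadicity theorem of Beck, for which the essential input is that $U$ has a left adjoint $F$, is conservative, and creates (or at least preserves and reflects) coequalizers of $U$-split pairs. Since $U$ is assumed to be a right adjoint, the adjunction is in hand; conservativity is hypothesis (ii) (or is immediate in direction (i)$\Rightarrow$(ii) from monadicity). So the real content is to trade "preserves coequalizers of $U$-split pairs" for the weaker-looking "preserves regular epimorphisms with $U$-split kernel-pairs", using exactness of $\B$.

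First I would treat the easy direction (i)$\Rightarrow$(ii): if $U$ is monadic it is conservative, and a $U$-split kernel-pair is in particular a reflexive pair whose coequalizer $U$ sends to a (split, hence absolute) coequalizer; monadic functors create such coequalizers, and the coequalizer of a kernel-pair in the exact category $\B$ is a regular epi which $U$ must therefore preserve. For the substantive direction (ii)$\Rightarrow$(i), the key step is the following reduction: given a $U$-split pair $u,v\colon X\rightrightarrows Y$ in $\B$, I want to produce its coequalizer in $\B$ and show $U$ preserves it. Because $\B$ is exact, I would factor the pair through the effective equivalence relation it generates; concretely, form in $\A$ the kernel-pair of the $U$-split coequalizer $q_0\colon UY\to Q$ of $(Uu,Uv)$ (this kernel-pair exists since $q_0$ is a split, hence regular, epi and $\A$ has kernel-pairs of split epis), lift it to $\B$ using that $U$ is a right adjoint... — more cleanly, replace $(u,v)$ by the kernel-pair $(p_1,p_2)\colon R\rightrightarrows Y$ in $\B$ of the coequalizer-to-be, observing that $(u,v)$ and $(p_1,p_2)$ have the same coequalizer and that $U(p_1,p_2)$ is again $U$-split (its image is the kernel-pair in $\A$ of the split epi $q_0$, and kernel-pairs of split epis are themselves split). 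Now $(p_1,p_2)$ is a kernel-pair in the exact category $\B$, hence an effective equivalence relation, so it has a coequalizer $q\colon Y\to C$ which is a regular epi and of which $(p_1,p_2)$ is the kernel-pair; by hypothesis (ii), $U$ preserves the regular epi $q$, i.e. $Uq$ is a regular epi in $\A$, and $Uq$ coequalizes $(Up_1,Up_2)$. A diagram chase then identifies $Uq$ with $q_0$ up to isomorphism (both are regular-epi coequalizers of the same $U$-split pair in $\A$, using that $\A$'s reflexive coequalizers in question are the absolute ones coming from the split situation), so $U$ sends the coequalizer in $\B$ to the coequalizer in $\A$. Finally, conservativity of $U$ upgrades "preserves the coequalizer" to "creates it", and crude monadicity yields that $U$ is monadic.

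The main obstacle I anticipate is the bookkeeping in that reduction from an arbitrary $U$-split pair to a kernel-pair: one must check (a) that passing to the kernel-pair of the prospective coequalizer does not change the coequalizer, (b) that $U$-splitness is inherited — this is where "kernel-pairs of split epimorphisms" and the fact that split epis are preserved by every functor do the work — and (c) that the regular epi $Uq$ produced by hypothesis (ii) really is \emph{the} coequalizer in $\A$ and not just some epi through which things factor; here exactness of $\B$ (every regular epi is the coequalizer of its kernel-pair) together with the uniqueness of coequalizers is what closes the gap. Once these are in place, the appeal to Beck's crude monadicity theorem is routine, and the role of the hypothesis that $\A$ has kernel-pairs of split epimorphisms is exactly to make the construction of $q_0$'s kernel-pair (and hence the lifted kernel-pair in $\B$) legitimate.
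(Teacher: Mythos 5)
The easy direction (i)$\Rightarrow$(ii) of your sketch is fine, and so are the small auxiliary facts you invoke (kernel pairs of split epimorphisms form split forks; a regular epimorphism is the coequalizer of its kernel pair; right adjoints preserve kernel pairs). But the central step of your (ii)$\Rightarrow$(i) argument is circular. You propose to replace a $U$-split pair $u,v\colon X\to Y$ in $\B$ by ``the kernel-pair of the coequalizer-to-be'' --- yet that coequalizer is precisely what you are trying to show exists in $\B$, so its kernel pair cannot be formed; and the alternative you gesture at, taking the kernel pair of $q_0$ in $\A$ and ``lifting it to $\B$ using that $U$ is a right adjoint,'' is not an available operation: right adjoints do not lift objects or relations from $\A$ to $\B$. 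Exactness of $\B$ makes equivalence relations you already have in $\B$ effective; it does not produce the equivalence relation generated by an arbitrary $U$-split pair. Constructing a suitable relation (say as the image of $\langle u,v\rangle$ in $Y\times Y$) and proving it is an equivalence relation with the same coequalizer as $(u,v)$ --- using conservativity and the splitting downstairs, and noting that $U$ need not preserve the epimorphism $X\to R$, so your point (a) is not automatic either --- is exactly the nontrivial content of Duskin-type theorems, and your outline leaves it blank. In effect you have set yourself the stronger goal of supplying coequalizers of \emph{all} $U$-split pairs so as to invoke Beck's crude theorem, which is more than hypothesis (ii) directly controls; that is why the statement is a variation of Duskin's criterion rather than of Beck's.

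The workable route (the paper itself gives no proof but refers to \cite{M}) is to use Duskin's classical criterion as the black box: with $\A$ having kernel pairs of split epimorphisms and $\B$ having coequalizers of $U$-split equivalence relations, a right adjoint $U$ is monadic iff it is conservative and preserves coequalizers of $U$-split equivalence relations. Then only equivalence relations need to be handled, and there exactness does all the construction for you: a $U$-split equivalence relation $(p_1,p_2)$ on $Y$ in $\B$ is effective, so its coequalizer $q$ exists, is a regular epimorphism, and has $(p_1,p_2)$ as its kernel pair; hypothesis (ii) makes $Uq$ a regular epimorphism, $U$ preserves the kernel pair, and hence $Uq$ is the coequalizer of $(Up_1,Up_2)$. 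This verifies Duskin's hypotheses without ever constructing coequalizers of arbitrary $U$-split pairs; if you want to keep your Beck-style plan instead, you must supply the missing construction of the equivalence relation in $\B$ (essentially reproving Duskin's theorem), which is the genuine gap in the proposal.
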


Since any abelian category is coexact and since in such a category regular monomorphisms coincide with monomorphisms,
it follows from the dual of Theorem \ref{Ex} that

\begin{theorem}\label{Ab} A left adjoint additive functor $F: \A \to \B$ between abelian categories is comonadic if and only if $F$ is conservative
and $F$ preserves those monomorphisms whose cokernel-pairs are $F$-split.
\end{theorem}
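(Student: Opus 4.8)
The plan is to deduce Theorem \ref{Ab} from Theorem \ref{Ex} by a routine passage to opposite categories, exploiting the self-dual good behaviour of abelian categories. First I would record that for a functor $F\colon\A\to\B$ having a right adjoint, comonadicity of $F$ is the same as monadicity of $F^{\mathrm{op}}\colon\A^{\mathrm{op}}\to\B^{\mathrm{op}}$, and that $F^{\mathrm{op}}$ is then a right adjoint. I would apply Theorem \ref{Ex} to $U:=F^{\mathrm{op}}$, so that the ``$\B$'' of that theorem is $\A^{\mathrm{op}}$ and its ``$\A$'' is $\B^{\mathrm{op}}$. Its two hypotheses hold for formal reasons: $\A^{\mathrm{op}}$ is exact because the opposite of an abelian category is again abelian and abelian categories are Barr-exact, and $\B^{\mathrm{op}}$ admits kernel-pairs of split epimorphisms because the abelian category $\B$ has all cokernel-pairs. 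I would also observe, in passing, that the additivity hypothesis on $F$ is not really needed: a left adjoint between additive categories preserves finite coproducts, hence binary biproducts, and is therefore automatically additive.

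With the hypotheses in place, Theorem \ref{Ex} tells us that $F^{\mathrm{op}}$ is monadic if and only if $F^{\mathrm{op}}$ is conservative and $F^{\mathrm{op}}$ preserves those regular epimorphisms of $\A^{\mathrm{op}}$ whose kernel-pairs are $F^{\mathrm{op}}$-split. The second step is to translate this across the duality back to $F\colon\A\to\B$. Conservativity is self-dual. A regular epimorphism of $\A^{\mathrm{op}}$ is a regular monomorphism of $\A$, and this is where the abelian hypothesis on $\A$ enters essentially: in an abelian category every monomorphism is the kernel of its cokernel, hence is regular, so ``regular epimorphism of $\A^{\mathrm{op}}$'' may be replaced by ``monomorphism of $\A$''. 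The kernel-pair of such a morphism in $\A^{\mathrm{op}}$ is precisely its cokernel-pair in $\A$; being $F^{\mathrm{op}}$-split translates verbatim into being $F$-split (and, $F$ being a left adjoint, it preserves the cokernel-pair, so there is no ambiguity in forming it before or after applying $F$); and ``$F^{\mathrm{op}}$ preserves the regular epimorphism'' becomes ``$F$ preserves the corresponding monomorphism''. Putting these together, condition (ii) of Theorem \ref{Ex} for $F^{\mathrm{op}}$ becomes exactly: $F$ is conservative and $F$ preserves those monomorphisms of $\A$ whose cokernel-pairs are $F$-split, which is the assertion of Theorem \ref{Ab}.

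Since the whole argument is bookkeeping around Theorem \ref{Ex}, there is no genuinely hard step; the mathematical content lies entirely in Theorem \ref{Ex} itself. The points that do require care are (a) verifying that passing to the opposite of an abelian category preserves the hypotheses of Theorem \ref{Ex}, which reduces to the standard facts that abelian categories are Barr-exact and stable under taking duals; (b) the identification of regular monomorphisms with arbitrary monomorphisms, valid only because $\A$ is abelian, which is what licenses dropping the word ``regular'' in the final statement; and (c) matching the phrase ``the kernel-pair is $U$-split'' in Theorem \ref{Ex} with the phrase ``the cokernel-pair is $F$-split'' used here, so that an $F$-split cokernel-pair equalizer diagram in $\B$ is literally the dual of a $U$-split kernel-pair coequalizer diagram. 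Once these are checked, the equivalence of (i) and (ii) in Theorem \ref{Ex} transports directly to the equivalence asserted in Theorem \ref{Ab}.
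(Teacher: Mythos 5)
Your proposal is correct and is essentially the paper's own argument: the paper likewise deduces Theorem \ref{Ab} as the dual of Theorem \ref{Ex}, citing only that any abelian category is coexact (its opposite is exact) and that regular monomorphisms coincide with monomorphisms there. Your write-up merely makes explicit the dualization bookkeeping that the paper leaves as a one-line remark.
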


Let $\mathcal{E}$ be the category dual to the category of commutative rings (with 1). It is well-known that the category
of commutative rings has pushouts: If $B \to E$ and $B \to E'$
are morphisms of commutative rings, then their pushout is the tensor product $E \otimes_B E'$. Thus the category
$\mathcal{E}$ admits pullbacks and the assignments $B \longmapsto \textbf{Mod}_B$ and $(p:B \to E) \longmapsto (p^*=E\otimes_B- :\textbf{Mod}_B
 \to \textbf{Mod}_E)$ define an $\mathcal{E}$-indexed category $\mathcal{E}^{\text{op}} \to \mathbf{CAT}$, which we shall denote by $\M$.
We say that a morphism of commutative rings is an \emph{effective descent morphism for modules} if it is an effective $\M$-descent morphism. As is well known,  $\M$ satisfies the dual of the
Beck-Chevalley condition. Thus, a morphism $B \to E$ is an effective descent morphism for modules iff
the extension-of-scalars functor $p^* :\textbf{Mod}_B \to \textbf{Mod}_E$ is comonadic.

\,
\,

We are now ready to state our main result:
\begin{theorem} \label{main}A morphism of commutative rings is an effective descent morphism for modules if and only if it is pure.
\end{theorem}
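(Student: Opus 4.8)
The strategy is to apply Theorem~\ref{Ab} to the extension-of-scalars functor $p^* = E \otimes_B - : \textbf{Mod}_B \to \textbf{Mod}_E$, combined with the well-known fact that $p^*$ is faithful (equivalently, conservative, since it is additive between abelian categories) if and only if $p$ is a pure morphism of commutative rings. This immediately gives one direction: if $p^*$ is comonadic, then it is conservative, hence $p$ is pure; recalling from Section~2 that a morphism is an effective descent morphism for modules precisely when $p^*$ is comonadic, this shows every effective descent morphism is pure.

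For the converse, suppose $p : B \to E$ is pure. Then $p^*$ is faithful, hence conservative, so by Theorem~\ref{Ab} it remains only to check that $p^*$ preserves those monomorphisms $f : X \to Y$ in $\textbf{Mod}_B$ whose cokernel-pairs are $p^*$-split. The plan is to observe that a monomorphism whose cokernel-pair is $p^*$-split is exactly the equalizer of a $p^*$-split pair of morphisms of $B$-modules: dualizing the standard fact that a regular epimorphism with $U$-split kernel-pair is the coequalizer of that kernel-pair, here $f$ is the equalizer of its cokernel-pair $Y \rightrightarrows Y \amalg_X Y$ (since $f$ is a monomorphism and the ambient category is abelian, every monomorphism is a regular monomorphism, i.e.\ the equalizer of its cokernel-pair), and the hypothesis says this cokernel-pair becomes split after applying $p^*$. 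Now Lemma~\ref{L} applies: since $p$ is pure, the equalizer $f$ of any such $p^*$-split pair is a pure morphism of $B$-modules. A pure morphism of $B$-modules is in particular sent to a monomorphism by every functor of the form $N \otimes_B -$; taking $N = E$ shows $p^*(f) = E \otimes_B f$ is monic. Thus $p^*$ preserves the relevant monomorphisms, and Theorem~\ref{Ab} yields that $p^*$ is comonadic, i.e.\ $p$ is an effective descent morphism for modules.

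The main obstacle — and the place where all the real work sits — is precisely the step that identifies monomorphisms-with-$p^*$-split-cokernel-pair with equalizers of $p^*$-split pairs and then invokes Lemma~\ref{L}; but that lemma has already been proved in the excerpt, so the remaining burden is the modest bookkeeping of dualizing Duskin's criterion correctly (matching ``cokernel-pair'' in Theorem~\ref{Ab} with ``split equalizer diagram'' in the hypothesis of Lemma~\ref{L}) and confirming that the cokernel-pair of $f$, once $p^*$-split, gives a diagram to which Lemma~\ref{L} literally applies. One should also take care that the equivalence ``$p^*$ faithful $\iff$ $p$ pure'' is the commutative-ring version: faithfulness of $E \otimes_B -$ on all $B$-modules forces $N \cong N \otimes_B B \to N \otimes_B E$ to be monic for each $N$ (using that $p^*$ preserves and reflects the property of a map being zero, applied to kernels), which is exactly purity of $p$; this is a standard argument and causes no trouble. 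No version of Beck's theorem requiring preservation of coequalizers is needed — that is the whole point of routing through Theorem~\ref{Ab}.
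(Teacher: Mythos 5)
Your proposal is correct and follows essentially the same route as the paper: one direction via the equivalence of purity with conservativity of $p^{*}$, and the converse by applying Theorem~\ref{Ab}, recognizing a monomorphism with $p^{*}$-split cokernel-pair as the equalizer of that $p^{*}$-split pair, and invoking Lemma~\ref{L} to conclude $f$ is pure. The only (harmless) difference is at the final step, where you deduce that $E\otimes_{B}f$ is monic directly from the definition of purity (taking $N=E$), whereas the paper passes through the characterization of pure morphisms as directed colimits of split monomorphisms; your shortcut is valid and slightly more economical.
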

\begin{proof} We note first that a morphism of commutative rings is pure if and only if
the corresponding extension-of-scalars functor is conservative (e.g., \cite{JT3}). Thus one direction is trivial.

For the converse, let $p:B \to E$ be a pure morphism of commutative rings. Then the functor
$p^*=E \otimes_B - :\textbf{Mod}_B \to \textbf{Mod}_E$ is conservative. Thus, in order to be able to apply
Theorem \ref{Ab} to prove the comonadicity of $p^*$, we have only to show that $p^*$ preserves those monomorphisms whose cokernel-pairs are $p^*$-split.
So suppose that $f:X \to Y$ is a monomorphism of $B$-modules whose cokernel-pair $
\xymatrix {Y  \ar@{->}@<0.5ex>[r]^-{i_1}\ar@<-0.7ex>[r]_-{i_2}& Y\sqcup_XY }$ is $p^*$-split. Then since the diagram
$$
\xymatrix { X    \ar[r]^{f}&    Y   \ar@{->}@<0.5ex>[r]^-{i_1}\ar@<-0.7ex>[r]_-{i_2}& Y\sqcup_XY  }
$$ is an equalizer diagram, we conclude by Lemma \ref{L} that $f$ is a pure morphism of $B$-modules.
Then it is a directed colimit of split monomorphisms of $B$-modules and
hence $p^*(f)=E \otimes_B f$ is a directed colimit of split monomorphisms of $E$-modules, thus a monomorphism in $\textbf{Mod}_E$.
Consequently, the functor $p^*$ is comonadic. So $p$ is an effective descent morphism for modules, as required.
\end{proof}

\section{Moerdijk's conditions}

Let $\A$ be a category with pullbacks. We say that a morphism $p:E \to B $ in $\A$ is an
\emph{effective descent morphism in} $\A$ if $p$ is an effective descent morphism with respect to the basic $\A$-indexed category,
which is $-$since the Beck-Chevalley condition is always satisfied
by the basic $\A$-indexed category$-$ equivalent to saying that
the change-of-base functor $$p^*=E\times_B -:\A/B \to \A/E$$
given by pulling back along $p$, is comonadic.

In \cite{Mk}, I. Moerdijk proved that if a class $\mathfrak{C}$ of morphisms in $\mathcal{A}$ satisfies the following conditions:
\begin{itemize}
  \item [](I) $\mathfrak{C}$ contains
the isomorphisms and is closed under composition,
  \item [](II) every morphism in $\mathfrak{C}$ is a regular epimorphism in $\mathcal{A}$,
  \item [](III) $\mathfrak{C}$ is stable under pullback, and
  \item [](IV) the coequalizer of a parallel pair of $\mathfrak{C}$-morphisms exists in $\mathcal{A}$, and is stable
under pullback along $\mathfrak{C}$-morphisms,
\end{itemize} then the members of $\mathfrak{C}$ are effective descent morphisms in $\A$.

\,
\,

\noindent We recall the following result:

\begin{theorem} \label{JT}\emph{(}\cite{JT3}\emph{)} The following are equivalent for a morphism $p: B \to E$ of commutative rings:
\begin{itemize}
  \item [(i)] $p: B \to E$ is a pure morphism of $B$-modules,
  \item  [(ii)] $p: B \to E$ is effective descent morphism for modules,
  \item [(iii)] $p: B \to E$ is effective descent morphism in the dual of the category of commutative rings, $\mathcal{E}$.
\end{itemize}
\end{theorem}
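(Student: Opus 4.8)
The equivalence (i) $\Leftrightarrow$ (ii) is precisely Theorem \ref{main} (together with the characterisation of purity recalled at the start of its proof), so the entire task is to tie in (iii). Recall that, exactly as in the module case of Section 2, the morphism $p$ is an effective descent morphism in $\mathcal{E}$ if and only if the extension-of-scalars functor for \emph{algebras}, $E\otimes_B-:\textbf{CAlg}_B\to\textbf{CAlg}_E$, is comonadic: here $\mathcal{E}/B$ is identified with $(\textbf{CAlg}_B)^{\text{op}}$, the change-of-base functor of the basic $\mathcal{E}$-indexed category with (the opposite of) $E\otimes_B-$, and $\mathrm{Des}_{\mathcal{E}}(p)$ with the category of Eilenberg--Moore coalgebras of the associated comonad. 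I would prove (iii) $\Rightarrow$ (i) and (ii) $\Rightarrow$ (iii) separately.

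For (iii) $\Rightarrow$ (i) I would argue by conservativity. If $p$ is an effective descent morphism in $\mathcal{E}$, then the comparison functor $\textbf{CAlg}_B\to\mathrm{Des}_{\mathcal{E}}(p)$ is an equivalence; composing it with the (always conservative) forgetful functor from descent data recovers $E\otimes_B-:\textbf{CAlg}_B\to\textbf{CAlg}_E$, which is therefore conservative. The trivial square-zero extension functor $\textbf{Mod}_B\to\textbf{CAlg}_B$, $M\mapsto B\oplus M$, is fully faithful and satisfies $E\otimes_B(B\oplus M)\cong E\oplus(E\otimes_B M)$ compatibly with the ring structures, so a $B$-module map $g$ with $E\otimes_B g$ invertible yields an algebra map $1_B\oplus g$ with $E\otimes_B(1_B\oplus g)$ invertible; conservativity on algebras forces $1_B\oplus g$, hence $g$, to be invertible. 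Thus $E\otimes_B-:\textbf{Mod}_B\to\textbf{Mod}_E$ is conservative, which (by the characterisation recalled in the proof of Theorem \ref{main}) means that $p$ is pure.

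For (ii) $\Rightarrow$ (iii) the plan is to bootstrap comonadicity from modules to algebras along the symmetric monoidal structure. One has $\textbf{CAlg}_B=\mathbf{CMon}(\textbf{Mod}_B)$, and the category $\mathrm{Des}_{\mathcal{E}}(p)$ of descent data in $\mathcal{E}$ is equivalent to $\mathbf{CMon}(\mathrm{Des}_{\M}(p))$, the commutative monoids in the category of $\M$-descent data: a descent datum on an $E$-algebra is exactly a descent datum on its underlying $E$-module which is a morphism of commutative monoids, the cocycle/cocategory conditions becoming the monoid axioms. Under these identifications the comparison functor for $\mathcal{E}$ is $\mathbf{CMon}(-)$ applied to the comparison functor $K_p:\textbf{Mod}_B\to\mathrm{Des}_{\M}(p)$ for $\M$. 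Since the extension-of-scalars functors are strong symmetric monoidal, $K_p:\textbf{Mod}_B\to\mathrm{Des}_{\M}(p)$ is a symmetric monoidal functor, and by (ii) it is an equivalence, hence a symmetric monoidal equivalence; applying $\mathbf{CMon}(-)$ shows that $\textbf{CAlg}_B\to\mathrm{Des}_{\mathcal{E}}(p)$ is an equivalence, i.e.\ $p$ is an effective descent morphism in $\mathcal{E}$.

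The main obstacle is the identification $\mathrm{Des}_{\mathcal{E}}(p)\simeq\mathbf{CMon}(\mathrm{Des}_{\M}(p))$ together with the matching of the comparison functors: concretely, one must check that the comonad on $\textbf{CAlg}_E$ induced by $E\otimes_B-\dashv(\text{restriction of scalars})$ is the canonical lift to commutative monoids of the lax symmetric monoidal comonad on $\textbf{Mod}_E$ coming from $E\otimes_B-\dashv(\text{restriction})$, and that forming commutative-monoid objects commutes with passage to Eilenberg--Moore coalgebras, all while keeping track of the variance introduced by $\mathcal{E}$ being a dual category. An alternative that sidesteps the monoidal lifting is to verify Beck's precise monadicity criterion directly for the pullback functor $\mathcal{E}/B\to\mathcal{E}/E$: since $p$ is pure this functor is conservative by the square-zero argument above, and since the forgetful functor $\textbf{CAlg}_B\to\textbf{Mod}_B$ creates the limits and colimits in question, the required preservation of split (co)equalizers reduces to the module statement already obtained via Lemma \ref{L} in the proof of Theorem \ref{main}.
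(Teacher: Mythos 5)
The paper itself gives no proof of Theorem \ref{JT}: it is recalled from \cite{JT3}, and the only part proved in the paper is (i)$\Leftrightarrow$(ii), i.e.\ Theorem \ref{main}. So your proposal can only be measured against the standard argument, and your skeleton is close to it. Deferring (i)$\Leftrightarrow$(ii) to Theorem \ref{main} is exactly right; your (iii)$\Rightarrow$(i) --- comonadic functors are conservative, and conservativity passes from commutative $B$-algebras to $B$-modules through the square-zero extension $M\mapsto B\oplus M$, whence purity by the characterisation quoted in the proof of Theorem \ref{main} --- is correct, and uses the same device as the Remark following Proposition \ref{Pure}; and your main route for (ii)$\Rightarrow$(iii), identifying $\mathcal{E}/B$ with the opposite of the category of commutative $B$-algebras, algebras with commutative monoids in $\textbf{Mod}_B$, and $\mathrm{Des}_{\mathcal{E}}(p)$ with commutative monoids in $\mathrm{Des}_{\M}(p)$ so that the monoidal equivalence $K_p$ induces the algebra-level equivalence, is essentially how this implication is obtained in the literature. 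However, as you yourself note, the compatibility of these identifications with the two comparison functors is ``the main obstacle,'' and you do not carry it out; for (ii)$\Rightarrow$(iii) what you have is therefore a correct plan rather than a proof.

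The one concrete misstep is in your fallback argument via Beck's precise criterion. What the proof of Theorem \ref{main} extracts from Lemma \ref{L} is only that the equalizer inclusion $f$ of a $p^*$-split pair is pure, hence that $E\otimes_B f$ is a monomorphism; it does \emph{not} show that $E\otimes_B-$ carries the equalizer diagram to an equalizer diagram, and avoiding exactly that stronger preservation statement is the whole point of Theorem \ref{Ab} --- which is unavailable at the algebra level, since the category of commutative $B$-algebras is neither abelian nor coexact. So ``the required preservation of split (co)equalizers'' does not reduce to what Lemma \ref{L} gives. The repair is to invoke the conclusion of Theorem \ref{main} instead: once $E\otimes_B-:\textbf{Mod}_B\to\textbf{Mod}_E$ is known to be comonadic, the dual of Beck's theorem says it creates, hence preserves, equalizers of $p^*$-cosplit pairs; a pair of algebra maps that is cosplit in $E$-algebras is cosplit in $\textbf{Mod}_E$, and the forgetful functors create the relevant equalizers, so the algebra-level extension-of-scalars functor (left adjoint to restriction of scalars) preserves equalizers of cosplit pairs, is conservative because its module-level counterpart is, and is therefore comonadic by the dual of the precise monadicity theorem. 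With that correction your second route yields (ii)$\Rightarrow$(iii) without any monoidal bookkeeping.
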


In view of Theorem \ref{JT} it is natural to ask if the class $\mathfrak{P}$ of $\mathcal{E}$-morphisms determined by
pure morphisms of commutative rings satisfies Conditions I-IV. The answer is no, and now we wish to
prove it.

\begin{lemma}\label{L1}Let $\mathcal{A}$ be a category with finite sums and pullbacks that are distributive over finite sums and
 $\mathfrak{C}$ be a class of morphisms in $\mathcal{A}$ satisfying Condition \emph{(IV)}. If
$\mathfrak{C}$ contains split epimorphisms, then for any morphism $p:E \to B$ in $\mathfrak{C}$, the functor
$$E \times_B -: \mathcal{A}/B \to \mathcal{A}/B$$ preserves all coequalizers.
\end{lemma}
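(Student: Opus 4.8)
The plan is to reduce an arbitrary coequalizer in $\mathcal{A}/B$ to the coequalizer of a parallel pair of split epimorphisms --- which, since $\mathfrak{C}$ contains the split epimorphisms, is a parallel pair of $\mathfrak{C}$-morphisms --- and then to apply Condition (IV). It is convenient to factor the functor under consideration as $E\times_B- \cong p_!\circ p^{*}$, where $p^{*}\colon\mathcal{A}/B\to\mathcal{A}/E$ is pullback along $p$ and $p_!\colon\mathcal{A}/E\to\mathcal{A}/B$ is composition with $p$. Since $p_!$ is left adjoint to $p^{*}$ (this is the Beck--Chevalley left adjoint of the basic indexed category), $p_!$ preserves all colimits, so it suffices to prove that $p^{*}$ preserves coequalizers.

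So let $u,v\colon A\to C$ be a parallel pair in $\mathcal{A}/B$ with coequalizer $w\colon C\to Q$. Pass to the augmented pair $\bar u=[u,1_C]$, $\bar v=[v,1_C]$ from $A\sqcup C$ to $C$; here the coproduct is formed in $\mathcal{A}/B$, hence as in $\mathcal{A}$, since the domain functor $\mathcal{A}/B\to\mathcal{A}$ creates colimits. Both $\bar u$ and $\bar v$ are split by the coproduct insertion $C\hookrightarrow A\sqcup C$, hence are split epimorphisms and so lie in $\mathfrak{C}$; and a morphism out of $C$ coequalizes $\bar u,\bar v$ exactly when it coequalizes $u,v$, so $w\colon C\to Q$ is also the coequalizer of the pair $(\bar u,\bar v)$, in $\mathcal{A}/B$ and therefore in $\mathcal{A}$.

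Next, invoke Condition (IV) for the $\mathfrak{C}$-pair $(\bar u,\bar v)$: its coequalizer is stable under pullback along $\mathfrak{C}$-morphisms. Regarding $A\sqcup C$, $C$ and $Q$ as objects over $Q$ (the first two via $w\bar u=w\bar v$ and via $w$), the canonical isomorphisms $E\times_B C\cong C\times_Q(E\times_B Q)$ and $E\times_B(A\sqcup C)\cong(A\sqcup C)\times_Q(E\times_B Q)$ show that applying $p^{*}$ to the coequalizer fork of $(\bar u,\bar v)$ is the same as pulling that fork back along the projection $E\times_B Q\to Q$; and this projection is the base change of the $\mathfrak{C}$-morphism $p\colon E\to B$ along $Q\to B$, hence a pullback of the kind covered by Condition (IV). Consequently the $p^{*}$-image of the fork is again a coequalizer in $\mathcal{A}/E$. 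Finally, distributivity of pullbacks over finite sums identifies $p^{*}(A\sqcup C)$ with $p^{*}A\sqcup p^{*}C$, under which the resulting pair becomes $\big([p^{*}u,1],[p^{*}v,1]\big)$; since its coequalizer agrees with that of $p^{*}u,p^{*}v\colon p^{*}A\to p^{*}C$, we conclude that $p^{*}$ --- and hence $E\times_B-$ --- preserves the coequalizer of $(u,v)$.

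I expect the main obstacle to be the invocation of Condition (IV). An arbitrary parallel pair need not consist of $\mathfrak{C}$-morphisms, and the augmentation to $([u,1_C],[v,1_C])$, together with the hypothesis that $\mathfrak{C}$ contains the split epimorphisms, is precisely what remedies this; one then has to identify the base change being performed --- pullback along $E\times_B Q\to Q$, itself a pullback of $p\in\mathfrak{C}$ --- as one to which the pullback-stability clause of Condition (IV) applies. The remaining points (computation of colimits and coproducts in the slice $\mathcal{A}/B$, and the distributivity bookkeeping that removes the augmentation on the $\mathcal{A}/E$ side) are routine.
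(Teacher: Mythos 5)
Your argument is correct and is essentially the paper's own proof: the augmented pair $[u,1_C],[v,1_C]\colon A\sqcup C\to C$ is precisely the reduction to coequalizers of reflexive (split-epi) pairs that the paper obtains by citing the dual of Barr's Theorem 4.1, and the remaining steps — split epimorphisms lie in $\mathfrak{C}$, distributivity of pullbacks over finite sums identifies $E\times_B(A\sqcup C)$ with $(E\times_B A)\sqcup(E\times_B C)$, and Condition (IV) gives that the coequalizer of this $\mathfrak{C}$-pair is preserved by pulling back along $p$ — match the paper step for step. The only cosmetic differences are your factorization $E\times_B-\cong p_!\circ p^{*}$ and the fact that you spell out Barr's trick instead of citing it.
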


\begin{proof}Note first that since in slice categories, coequalizers are computed as in $\mathcal{A}$, it follows
in particular that being a coequalizer in a slice category
is equivalent to being a coequalizer in $\mathcal{A}$.

Next, since pullbacks distribute over finite sums in $\mathcal{A}$, the dual of the argument in the proof of \cite[Theorem 4.1]{B}
shows that the functor $E \times_B -: \mathcal{A}/B \to \mathcal{A}/B$ preserves all coequalizers if and only if it preserves coequalizers
of reflexive pairs. But if $\xymatrix {X  \ar@{->}@<0.5ex>[r]^{f}\ar@<-0.7ex>[r]_{g}& Y\ar[r]^{h}& Z }$ is such a coequalizer in $\mathcal{A}/B$,
then since both $f$ and $g$ are split epimorphisms, they belong to $\mathfrak{C}$, whence one concludes by Condition (IV) that
$$\xymatrix {E \times_B X\ar@{->}@<0.5ex>[r]^{E \times_B f}\ar@<-0.7ex>[r]_{E \times_B g}& E \times_B Y \ar[r]^{E \times_B h}& E \times_B Z  }$$
is a coequalizer diagram (in $\mathcal{A}$, and hence in $\mathcal{A}/B$). Thus the functor
$E \times_B -: \mathcal{A}/B \to \mathcal{A}/B$ preserves all coequalizers
\end{proof}

\begin{proposition} \label{Pure}The class $\mathfrak{P}$ does not satisfies Condition \emph{(IV)}.
\end{proposition}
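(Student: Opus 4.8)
The plan is to argue by contradiction, using Lemma~\ref{L1} to convert a hypothetical verification of Condition (IV) into the (false) assertion that every pure morphism of commutative rings is flat. Concretely, I would first record that $\mathcal{E}$ satisfies the hypotheses of Lemma~\ref{L1}: it has finite sums (they are finite products of commutative rings) and pullbacks (tensor products), and the former distribute over the latter because $R\otimes_B\bigl(\prod_i S_i\bigr)\cong\prod_i(R\otimes_B S_i)$ for finite index sets. Also, $\mathfrak{P}$ contains every split epimorphism of $\mathcal{E}$: a split monomorphism $B\to E$ of commutative rings exhibits $B$ as a retract of $E$ already as a $B$-module, and is therefore pure. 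Hence, if $\mathfrak{P}$ satisfied Condition (IV), Lemma~\ref{L1} would give, for every pure morphism $B\to E$, that the functor $E\times_B-\colon\mathcal{E}/B\to\mathcal{E}/B$ preserves all coequalizers. Since $\mathcal{E}/B$ is the opposite of the category of commutative $B$-algebras, and since under this identification $E\times_B-$ corresponds to the opposite of the extension-of-scalars functor $E\otimes_B-$ on commutative $B$-algebras, this amounts to saying: for every pure morphism $B\to E$, the functor $E\otimes_B-$ preserves all equalizers of commutative $B$-algebras.

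The main step is to upgrade this to flatness of $E$ over $B$, by means of trivial square-zero extensions. For a $B$-module $P$, let $S(P)$ denote the commutative $B$-algebra whose underlying $B$-module is $B\oplus P$ and whose multiplication is $(b,p)(b',p')=(bb',\,bp'+b'p)$; then $E\otimes_B S(P)\cong S(E\otimes_B P)$, the analogous square-zero extension over $E$, naturally in $P$. Given any homomorphism $\phi\colon M\to N$ of $B$-modules, consider the parallel pair of $B$-algebra maps $\alpha,\beta\colon S(M)\to S(N)$ with $\alpha(b,m)=(b,\phi(m))$ and $\beta(b,m)=(b,0)$; its equalizer in commutative $B$-algebras is exactly the subalgebra $S(\ker\phi)\hookrightarrow S(M)$. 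Applying $E\otimes_B-$ and using the identification above, preservation of this equalizer forces the canonical map $E\otimes_B\ker\phi\to E\otimes_B M$ to be a monomorphism (indeed with image $\ker(E\otimes_B\phi)$). Letting $\phi$ range over the quotient maps $M\to M/K$ for all submodules $K\subseteq M$, this says precisely that $E\otimes_B-$ preserves monomorphisms, i.e.\ that $E$ is flat over $B$.

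It remains to contradict this with a concrete pure-but-not-flat morphism. The map $\mathbb{Z}\to\mathbb{Z}\times\mathbb{Z}/2\mathbb{Z}$, $n\mapsto(n,\ n\bmod 2)$, is a split monomorphism of $\mathbb{Z}$-modules (the first projection is a retraction) and hence pure; but $\mathbb{Z}\times\mathbb{Z}/2\mathbb{Z}$ has $2$-torsion, so it is not flat over $\mathbb{Z}$. If one prefers to exhibit the failure directly, take $\phi\colon\mathbb{Z}\to\mathbb{Z}/2\mathbb{Z}$ in the construction above: the equalizer $S(2\mathbb{Z})\hookrightarrow S(\mathbb{Z})$ of the corresponding pair is not preserved by $(\mathbb{Z}\times\mathbb{Z}/2\mathbb{Z})\otimes_{\mathbb{Z}}-$, since the induced map on square-zero parts is multiplication by $2$ on $\mathbb{Z}\times\mathbb{Z}/2\mathbb{Z}$, which annihilates $(0,1)$. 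Either way we reach a contradiction, so $\mathfrak{P}$ does not satisfy Condition (IV).

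I expect the only genuinely non-formal step to be the implication ``$E\otimes_B-$ preserves equalizers of commutative $B$-algebras $\Rightarrow$ $E$ is $B$-flat''; the rest is bookkeeping in slice categories, one dualisation, the citation of Lemma~\ref{L1}, and a standard example. The subtlety to keep in mind is that Lemma~\ref{L1} delivers preservation only of those equalizers that arise from commutative $B$-algebras, not of arbitrary module equalizers; the role of the square-zero gadget is exactly to present every kernel of a surjection of $B$-modules as such an algebra equalizer, which is all that flatness requires.
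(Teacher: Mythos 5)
Your argument is correct, and its skeleton is the same as the paper's: assume Condition (IV), invoke Lemma~\ref{L1} (after checking distributivity of pullbacks over finite sums in $\mathcal{E}$ and that split epimorphisms of $\mathcal{E}$ lie in $\mathfrak{P}$), dualise to the statement that $E\otimes_B-$ preserves all equalizers of commutative $B$-algebras, deduce that $E$ is flat over $B$, and contradict this with a pure morphism that is not flat. Where you differ is in how the two non-formal ingredients are handled. The paper cites Chase--Sweedler (Proposition 4.6 of \cite{CS}) for the implication ``preserves equalizers of $B$-algebras $\Rightarrow$ $E$ is (faithfully) flat'', whereas you prove it directly with the square-zero extension $S(P)=B\oplus P$; this is in substance the argument the paper records separately in the Remark after Proposition~\ref{Pure} and attributes to Janelidze (there phrased via the functor $(-)_B$, the fact that monomorphisms of $B$-modules are equalizers, and the split inclusion $\iota_M$, while you instead exhibit the kernel of any $\phi\colon M\to N$ as the honest algebra equalizer of the pair $\alpha,\beta\colon S(M)\to S(N)$ --- both are correct). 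Likewise, the paper cites Borceux for the existence of a pure morphism that is not (faithfully) flat, whereas you give the explicit example $\mathbb{Z}\to\mathbb{Z}\times\mathbb{Z}/2\mathbb{Z}$, which is split as a map of $\mathbb{Z}$-modules hence pure, but has torsion target hence is not flat. Your version is more self-contained (and your explicit check that split epimorphisms of $\mathcal{E}$ belong to $\mathfrak{P}$, which Lemma~\ref{L1} needs, is left implicit in the paper); the paper's version is shorter by outsourcing both steps to the literature. Your closing remark about the subtlety --- that only equalizers coming from $B$-algebras are available, which is exactly what the square-zero device circumvents --- is well taken and is precisely the point of the Remark in the paper.
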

\begin{proof} We shall assume that the class $\mathfrak{P}$ satisfies Condition (IV), and use this assumption to derive a
contradiction.

Recall first that the category of commutative rings admits  also finite products and that pushouts distribute
over these products. Thus the category $\mathcal{E}$ admits pullbacks and finite sums, and pullbacks distribute
over finite sums.

Now, if Condition (IV) is satisfied by the class $\mathfrak{P}$, then it follows from Lemma \ref{L1} that
for any $\mathfrak{P}$-morphism $p: E \to B$, the functor $E \times_B -: \mathcal{E}/B \to \mathcal{E}/B$ preserves all coequalizers, which
is $-$since $(\mathcal{E}/B)^{\text{op}}$ is just  the category $B\textbf{-\text{Alg}}$ of commutative $B$-algebras$-$ equivalent to saying that
the functor $E \otimes_B -: B\textbf{-\text{Alg}} \to B\textbf{-\text{Alg}}$
preserves all equalizers, whence one concludes by \cite[Proposition 4.6]{CS} that
$E$ is a (faithfully) flat $B$-module. Thus, under Condition (IV), any pure morphism of commutative rings is faithfully flat.
But this is not true (see, for example, Exercise 4.8.13 in \cite{Bo}). This contradiction
completes the proof of the proposition.
\end{proof}

\begin{remark} {\em A much easier proof of Proposition \ref{Pure} has been provided by G. Janelidze. His idea is to
consider the functor $(-)_B :\textbf{Mod}_B \to B\textbf{-\text{Alg}}$ that takes a $B$-module $M$
to the semidirect product $M_B=B\oplus M$ of the ring $B$ with $M$ (see, \cite[p. 32]{CS}). Suppose now that
$p: E \to B$ is a $\mathfrak{P}$-morphism for which the functor $E \otimes_B -: B\textbf{-\text{Alg}} \to B\textbf{-\text{Alg}}$
preserves all equalizers and consider an arbitrary monomorphism $f: M \to M'$ of $B$-modules. Since monomorphisms in
$\textbf{Mod}_B$ are equalizers, and since the functor  $(-)_B$ preserves and reflects equalizer diagrams (\cite{CS}),
$f_B : M_B \to M'_B$ is an equalizer in $B\textbf{-\text{Alg}}$. Then $E \otimes_B f_B : E \otimes_B M_B \to E \otimes_B M'_B$
is also an equalizer (in $B\textbf{-\text{Alg}}$ and hence in $\textbf{Mod}_B$). Considering now the commutative diagram
$$\xymatrix{E \otimes_B M \ar[rr]^-{E \otimes_B f}\ar[d]_{E \otimes_B \iota_M} && E \otimes_B M' \ar[d]^{E \otimes_B \iota_{M'}} \\
E \otimes_B M_B \ar[rr]_-{E \otimes_B f_B} && E \otimes_B M'_B }$$ in which $\iota_{M}:M \to M_B$ and $\iota_{M'}:M' \to M'_B$ are the
canonical inclusions and using that $\iota_{M}$ (and hence also $E \otimes_B \iota_{M}$) is a split monomorphism, one sees that
$E \otimes_B f$ is a monomorphism. Thus, $E$ is a (faithfully) flat $B$-module. This implies that the class $\mathfrak{P}$ does not satisfies Condition (IV) by exactly the same argument as in Proposition \ref{Pure}.}
\end{remark}

{\bf Acknowledgements.} We are grateful to George Janelidze for valuable discussions
on the subject of this paper.

\bigskip

\noindent
{\bf Address:} \\[+1mm]
{A. Razmadze Mathematical Institute of I. Javakhishvili Tbilisi State University,\\
2, University st., Tbilisi
0186,  } {\small and} \\
 {Tbilisi Centre for Mathematical Sciences,\\
Chavchavadze Ave. 75, 3/35, Tbilisi 0168}, \\
Republic of Georgia.\\
    {\small bachi@rmi.ge}\\[+1mm]

\end{document}